% This is LLNCS.DEM the demonstration file of
% the LaTeX macro package from Springer-Verlag
% for Lecture Notes in Computer Science,
% version 2.4 for LaTeX2e as of 16. April 2010
%
\documentclass[runningheads,a4paper]{llncs} % \documentclass{llncs}
\usepackage{url}
\usepackage{bm}
\usepackage{enumerate}
\usepackage{amsmath,amssymb,amsbsy,latexsym}
\usepackage{graphicx}
\usepackage{xcolor}
%
% local definitions:

\newcommand\ul\underline
\newcommand\noul{}

\newcommand{\dd}{{\rm d}}

\newcommand\ddh[1]{\frac{\dd^{#1}}{\dd h^{#1}}}

\newcommand\nL{\mathcal{L}}

\newcommand\nS{\mathcal{S}}

\newcommand\Order{{\mathcal{O}}}

\newcommand\NN{\mathbb N}

\begin{document}
%
%\frontmatter            % for the preliminaries
%
\mainmatter              % start of the contributions
\pagestyle{myheadings}   % switches on printing of running heads
\title{Setup of
       Order Conditions for Splitting Methods}
\titlerunning{Order Conditions for Splitting Methods}  % abbreviated title (for running head)
%                                     also used for the TOC unless
%                                     \toctitle is used
%
\author{Winfried Auzinger \inst{1} \and Wolfgang Herfort \inst{1}
\and Harald Hofst{\"a}tter \inst{1} \and Othmar~Koch \inst{2}}
\authorrunning{Winfried Auzinger et al.} % abbreviated author list (for running head)
\institute{Technische Universit{\"a}t Wien, Austria \\
\email{w.auzinger@tuwien.ac.at,
       w.herfort@tuwien.ac.at,
       hofi@harald-hofstaetter.at}, \\
\url{www.asc.tuwien.ac.at/~winfried},
\url{www.asc.tuwien.ac.at/~herfort},
\url{www.harald-hofstaetter.at}
\and
Universit{\"a}t Wien, Austria \\
\email{othmar@othmar-koch.org}, \\
\url{www.othmar-koch.org}
}

\maketitle % typeset the title of the contribution

\begin{abstract} % at least 70 and at most 150 words
This article is based on~\cite{auzingeretal13c} and~\cite{auzingeretal15K1},
where an approach based on Taylor expansion and the structure of
its leading term as an element of a free Lie algebra was described
for the setup of a system of order conditions for operator splitting methods.
Along with a brief review of these materials and some theoretical background,
we discuss the implementation of the ideas from these papers
in computer algebra, in particular
using\footnote{Maple is a trademark of $ \text{MapleSoft}^{\text{TM}} $.}
Maple~18.
A parallel version of such a code is described.
\keywords{Evolution equations,
splitting methods,
order conditions,
local error,
Taylor expansion,
parallel processing}
\end{abstract}
\section{Introduction} \label{sec:intro}
The construction of higher order discretization schemes of one-step type
for the numerical solution of evolution equations is typically based
on the setup and solution of a large system of polynomial equations
for a number of unknown coefficients. Classical examples are Runge-Kutta
methods, and their various modifications, see e.g.~\cite{ketchetal13}.

To design particular schemes, we need to understand
\begin{itemize}
\item[(i)] how to generate a system of algebraic equations for the coefficients sought,
\smallskip
\item[(ii)] how to solve the resulting system of polynomial equations.
\end{itemize}
Here, we focus on (i) which depends on the particular class of
methods one is interested in. We consider {\em splitting methods,}
which are based on the
idea of approximating the exact flow of an evolution equation by
compositions based on (usually two) separate subflows
which are easier to evaluate.

Computer algebra is an indispensable tool for solving such a problem,
and there are different algorithmic approaches.
In general there is a tradeoff between `manual' a priori analysis
and machine driven automatization.

For splitting methods,
a well-known approach is based on recursive application of the
Baker-Campbell-Hausdorff (BCH) formula, see~\cite{haireretal06}.
Instead, we follow another approach based on Taylor expansion and
a theoretical result concerning the structure of the leading term
in this expansion.
This has the advantage that explicit knowledge of the BCH-coefficients
is not required. Moreover, our approach adapts easily to splitting
into more than two parts, and even to pairs of splitting schemes akin
to Runge-Kutta methods.

Topic (ii) is not discussed in this paper.
Details concerning the theoretical background and a discussion concerning
concrete results and optimized schemes obtained
are given in~\cite{auzingeretal15K1},
and a collection of optimized schemes can be found at~\cite{splithp}.
We note that a related approach has recently
also been considered in~\cite{blanesetal13a}.

\subsection{Splitting methods for the integration of evolution equations}
\label{subsec:splitting}
In many applications, the right hand side $ F(u) $ of an evolution equation
\begin{equation} \label{AB-problem}
\partial_t u(t) = F(u(t)) = A(u(t)) + B(u(t), \quad t \geq 0,
\quad u(0)~\text{given,}
\end{equation}
splits up in a natural way into two terms $ A(u) $ and $ B(u) $,
where the separate integration of the subproblems
\begin{equation*}
\partial_t u(t) = A(u(t)), \qquad \partial_t u(t) = B(u(t))
\end{equation*}
is much easier to accomplish than for the original problem.
\begin{example} \label{exa:lie-trotter}
The solution of a linear ODE system with constant coefficients,
\begin{equation*}
\partial_t u(t) = (A+B)\,u(t),
\end{equation*}
is given by
\begin{equation*}
u(t) = e^{t(A+B)}\,u(0).
\end{equation*}
The simplest splitting approximation (`Lie-Trotter'),
starting at some initial value $ u $ and applied with a time step of length $ t=h $,
is given by
\begin{equation*}
\nS(h,u) = e^{h B}\,e^{h A}\,u \approx e^{h(A+B)}u.
\end{equation*}
This is not exact (unless $ AB=BA $), but it satisfies
\begin{equation*}
\|(e^{h B}\,e^{h A} - e^{h(A+B)})u\| = \Order(h^2)
\quad \text{for}~~ h \to 0,
\end{equation*}
and the error of this approximation depends on behavior of the commutator
$ [A,B] = AB-BA $.
\qed
\end{example}
A general splitting method takes steps of the
form\footnote{By $ \Phi_F $ we denote the flow
              associated with the equation $ \partial_t u = F(u) $,
              and $ \Phi_A,\,\Phi_B $ are defined analogously.}
\begin{subequations} \label{AB-scheme}
\begin{equation} \label{AB-scheme-1}
\nS(h,u) = \nS_s(h,\nS_{s-1}(h,\ldots,\nS_1(h,u))) \approx \phi_F(h,u),
\end{equation}
with
\begin{equation} \label{AB-scheme-2}
\nS_j(h,v) = \phi_B(b_j\,h,\phi_A(a_j\,h,v)),
\end{equation}
\end{subequations}
where the (real or complex) coefficients $ a_j,b_j $ have to be found
such that a certain desired order of approximation for $ h \to 0 $ is obtained.

The local error of a splitting step is denoted by
\begin{equation} \label{local_error_notation}
\nS(h,u) - \phi_F(h,u) =: \nL(h,u).
\end{equation}

For our present purpose of finding asymptotic order conditions it is sufficient
to consider the case of a linear system, $ F(u) = F\,u = A\,u + B\,u $
with linear operators $ A $ and~$ B $.
We denote
\begin{equation*}
A_j = a_j\,A,~ B_j = b_j\,B, \quad j=1 \ldots s.
\end{equation*}
Then,
\begin{subequations} \label{AB-scheme-linear}
\begin{equation}  \label{AB-scheme-linear-1}
\nS(h,u) = \nS(h) u, \quad
\nS(h) = \nS_s(h)\,\nS_{s-1}(h)\,\cdots\,\nS_1(h) \approx e^{h F},
\end{equation}
with
\begin{equation}  \label{AB-scheme-linear-2}
\nS_j(h) = e^{h B_j}\,e^{h A_j}, \quad j=1 \ldots s.
\end{equation}
\end{subequations}
For the linear case the local error~\eqref{local_error_notation}
is of the form $ \nL(h) u $ with the linear operator
$ \nL(h) = \nS(h) - e^{hF} $.
\subsection{Commutators} \label{subsec:lie}
Commutators of the involved operators play a central role.
For formal consistency, we call $ A $ and $ B $ the `commutators of degree 1'.
There is (up to sign)
one non-vanishing\footnote{`Non-vanishing' means non-vanishing in general
                            (generic case, with no special assumptions on
                            $ A $ and $ B $).}
commutator of degree~2,
\begin{equation*}
[A,B] := A\,B - B\,A,
\end{equation*}
and there are two non-vanishing commutators of degree~3,
\begin{equation*}
[A,[A,B]] = A\,[A,B] - [A,B]\,A,
\quad
[[A,B],B] = [A,B]\,B - B\,[A,B],
\end{equation*}
and so on; see Section~\ref{subsec:locerr} for commutators of higher degrees.
\section{Taylor expansion of the local error} \label{sec:taylor}
\subsection{Representation of Taylor coefficients} \label{subsec:taycoe}
Consider the Taylor expansion, about $ h=0 $, of the local error operator $ \nL(h) $
of a consistent one-step method (satisfying the basic consistency
condition $ \nL(0)=0 $),
\begin{equation} \label{lerr-lead}
\nL(h) =
\sum_{q=1}^{p} \frac{h^q}{q!}\,\ddh{q}\,\nL(h)\,\Big|_{h=0}
%+ \frac{h^{p+1}}{(p+1)!}\,\ddh{p+1}\,\nL(0)\,u
+\, \Order(h^{p+1}).
\end{equation}
The method is of asymptotic order $ p $ iff $ \nL(h) = \Order(h^{p+1}) $
for $ h \to 0 $; thus the conditions for order $ \geq p $ are given by
\begin{equation} \label{OC-Tay}
\ddh{}\,\nL(h)\,\Big|_{h=0} = \;\ldots\; = \ddh{p}\,\nL(h)\,\Big|_{h=0} =\, 0.
\end{equation}
The formulas in~\eqref{OC-Tay} need to be presented in a more
explicit form, involving the operators $A$ and $B$.
For a splitting method~\eqref{AB-scheme-linear},
a calculation based on the Leibniz formula for higher derivatives
shows\footnote{If $ A $ and $ B $ commute, i.e., $ AB=BA $, then all
               these expressions vanish.}
(see~\cite{auzingeretal15K1})
\begin{equation} \label{OC-Tay-1-AB}
\ddh{q}\,\nL(h)\,\Big|_{h=0}
= \sum_{|{\bm k}|=q} \dbinom{q}{{\bm k}}
  \prod\limits_{j=s \ldots 1}\;
  \sum_{\ell=0}^{k_j} \dbinom{k_j}{\ell}\,B_j^{\ell}\,A_j^{k_j-\ell}
  \;-\; (A+B)^q,
\end{equation}
with $ {\bm k}=(k_1,\ldots,k_s) \in \NN_0^s $.

\paragraph{Representation of~\eqref{OC-Tay-1-AB} in Maple.}
The non-commuting operators $ A $ and $ B $ are represented by
symbolic variables {\tt A} and {\tt B}, which can be declared to
be non-commutative making use of the corresponding feature
implemented in the package {\tt Physics}.
Now it is straightforward to generate the sum~\eqref{OC-Tay-1-AB},
with unspecified coefficients $ a_j,b_j $,
using standard combinatorial tools;
for details see Section~\ref{sec:impl}.
\subsection{The leading term of the local error expansion} \label{subsec:locerr}
Formally, the multinomial sums in the expressions~\eqref{OC-Tay-1-AB}
are multivariate homogeneous polynomials of total degree $ q $ in the variables
$ a_j,b_j,\,j=1 \ldots s $, and the coefficients of these polynomials are
power products of total degree $ q $ composed of powers of the
non-commutative symbols~$ A $ and~$ B $.
\begin{example}[\,{\rm\cite{auzingeretal15K1}}] \label{exa:s=p=2}
For $ s=2 $ we obtain
\begin{align*}
\ddh{}\,\nL(h)\,\Big|_{h=0} &= \noul{(a_1+a_2)}\,A + \noul{(b_1+b_2)}\,B
                 \;-\;(A+B), \\[2\jot]
\ddh{2}\,\nL(h)\,\Big|_{h=0} &= ((a_1+a_2)^2)\,A^2  \label {OC-s=2-2}
                   + \noul{(2\,a_2\,b_1)}\,A\,B \\
                & \quad {}+ (2\,a_1\,b_1 + 2\,a_1\,b_2 + 2\,a_2\,b_2)\,B\,A
                   + ((b_1+b_2)^2)\,B^2 \\[\jot]
                 & \quad \,-\,(A^2 + A\,B + B\,A +B^2).
\end{align*}
The consistency condition for order $ p \geq 1 $ reads
$ \ddh{}\,\nL(h)\,\big|_{h=0} = 0 $, which is
equivalent to $ a_1+a_2=1 $ and $ b_1+b_2=1 $.

At first sight, for order $ p \geq 2 $ we need 4, or (at second sight)
2~additional equations to be satisfied, such that
$ \ddh{2}\,\nL(h)\,\big|_{h=0}=0 $.
However, assuming that the conditions for order $ p \geq 1 $ are satisfied,
the second derivative $ \ddh{2}\,\nL(h)\,\big|_{h=0} $
%which now represents the leading error term,
simplifies to the commutator expression
\begin{equation*}
\ddh{2}\,\nL(h)\,\Big|_{h=0} = \noul{(2\,a_2\,b_1 - 1)}\,[A,B],
\end{equation*}
giving the single additional condition $ 2\,a_2\,b_1 = 1$ for order $ p \geq 2 $.
Assuming now that $ a_1,a_2 $ and $ b_1,b_2 $ are chosen such
that all conditions for $ p \geq 2 $ are satisfied,
the third derivative $ \ddh{3}\,\nL(h)\,\big|_{h=0} $,
which now represents the leading term of the local error,
simplifies to a linear combination of the commutators
$ [A,[A,B]] $ and $ [[A,B],B] $, of degree~3, namely
\begin{equation*}
\qquad\quad
\ddh{3}\,\nL(h)\,\Big|_{h=0} = \noul{(3\,a_2^2\,b_1 - 1)}\,[A,[A,B]]
                               + \noul{(3\,a_2\,b_1^2 - 1)}\,[[A,B],B].
\qquad\qed
\end{equation*}
\end{example}
\begin{remark}
The classical second-order Strang splitting method corresponds to the choice
$ a_1=\frac{1}{2},\,b_1=1,\,a_2=\frac{1}{2},\,b_2=0 $, or
$ a_1=0,\,b_1=\frac{1}{2},\,a_2=1,\,b_2=\frac{1}{2} $.
\end{remark}
The observation from this simple example generalizes as follows:
\begin{proposition} \label{pro:leading-lie}
The leading term $ \ddh{p+1}\,\nL(h)\,\big|_{h=0} $
of the Taylor expansion of the local error $ \nL(h) $
of a splitting method of order $ p $ is a Lie element, i.e.,
it is a linear combination of commutators of degree $ p+1 $.
\end{proposition}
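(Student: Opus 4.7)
The strategy is to recognize that $\nS(h)$, being an ordered product of exponentials of Lie elements in $A$ and $B$, can itself be written as a single exponential $\nS(h)=e^{\Omega(h)}$ via the Baker--Campbell--Hausdorff (BCH) formula, where $\Omega(h)=\sum_{k\ge 1}h^k\omega_k$ is a formal power series whose coefficients $\omega_k$ are elements of the free Lie algebra generated by $A$ and $B$, each of homogeneous degree $k$. This is the one structural input needed from the theory of free Lie algebras (Dynkin/Friedrichs).

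First I would identify $\omega_1$. Since each factor $e^{hB_j}e^{hA_j}$ contributes $h(A_j+B_j)+\Order(h^2)$, the $h^1$ coefficient of $\Omega(h)$ is $\sum_j(a_jA+b_jB)$, so the first-order consistency conditions $\sum a_j=\sum b_j=1$ translate precisely into $\omega_1=A+B$.

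Next I would prove the equivalence: the method has order $p$ if and only if $\omega_k=0$ for $k=2,\ldots,p$. For this the convenient tool is the variation-of-constants identity
\begin{equation*}
e^{\Omega(h)}-e^{h(A+B)} \;=\; \int_0^1 e^{s\Omega(h)}\,(\Omega(h)-h(A+B))\,e^{(1-s)h(A+B)}\,\dd s.
\end{equation*}
If the first non-vanishing correction in $\Omega$ beyond $\omega_1$ sits at order $m$, i.e.\ $\Omega(h)-h(A+B)=h^m\omega_m+\Order(h^{m+1})$ with $\omega_m\neq 0$, then expanding both exponentials under the integral as $I+\Order(h)$ and integrating yields $\nL(h)=h^m\omega_m+\Order(h^{m+1})$. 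Hence order $p$ is equivalent to $m\ge p+1$.

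Under this hypothesis $\Omega(h)=h(A+B)+h^{p+1}\omega_{p+1}+\Order(h^{p+2})$, and the same identity gives $\ddh{p+1}\nL(h)\big|_{h=0}=(p+1)!\,\omega_{p+1}$, which is by construction a Lie element of homogeneous degree $p+1$, that is, a linear combination of nested commutators of length $p+1$ in $A$ and $B$. The main obstacle is step one: one must justify that the Lie-series logarithm of the ordered product $\nS(h)$ exists, which is a statement in the completion of the free Lie algebra on $\{A,B\}$ rather than in the free associative algebra used in~\eqref{OC-Tay-1-AB}. Once that structural fact is granted, the remaining steps amount to routine manipulation of power series in $h$.
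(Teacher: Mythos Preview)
Your argument is correct. The paper does not prove the proposition in place but defers to~\cite{auzingeretal13c,haireretal06}; the BCH route you take---write $\nS(h)=e^{\Omega(h)}$ with $\Omega$ a Lie series via Friedrichs' theorem, then identify the leading local-error coefficient with $(p{+}1)!\,\omega_{p+1}$ through the variation-of-constants identity---is precisely the classical argument one finds in~\cite{haireretal06}. The authors' companion reference~\cite{auzingeretal13c} is explicitly positioned in the introduction as an alternative to recursive BCH, so the proof there proceeds by a different mechanism, working directly with the associative Taylor coefficients~\eqref{OC-Tay-1-AB} rather than passing through the logarithm. What your version buys is conceptual economy: once the existence of the Lie-series logarithm is granted (the one nontrivial input you rightly flag), the remainder is a two-line power-series computation; the trade-off is that you import exactly the structural black box the present paper's algorithmic setup is designed to sidestep.
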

\begin{proof}
See~\cite{auzingeretal13c,haireretal06}.
\qed
\end{proof}
\begin{example} \label{exa:higher-comm}
Assume that the coefficients $ a_j,b_j,\,j=1 \ldots s $ have been found
such that the associated splitting scheme is of order $ p \geq 3 $
(this necessitates $ s \geq 3 $). This means that
\begin{equation*}
\ddh{}\,\nL(h)\,\Big|_{h=0} =\, \ddh{2}\,\nL(h)\,\Big|_{h=0}
                            =\, \ddh{3}\,\nL(h)\,\Big|_{h=0} =\, 0,
\end{equation*}
and from Proposition~\ref{pro:leading-lie} we know that
\begin{equation*}
\ddh{4}\,\nL(h)\,\Big|_{h=0} =
\gamma_1\,[A,[A,[A,B]]] + \gamma_2\,[A,[[A,B],B]] + \gamma_3\,[[[[A,B],B],B]
\end{equation*}
holds, with certain coefficients $ \gamma_k $
depending on the $ a_j $ and $ b_j $.
Here we have made use of the fact that there are three independent commutators
of degree $4$ in $A$ and $B$.
\qed
\end{example}
Targeting for higher-order methods one needs to know a {\em basis of commutators}\,
up to a certain degree. The answer to this question is
known, and a full set of independent commutators of degree $ q $ can be represented
by a set of words of length $ q $ over the alphabet $ \{ A,B \} $.
A prominent example is the set of {\em Lyndon-Shirshov words}\,
(see~\cite{bokutetal2006}) displayed in Table~\ref{tab:Lyndon-AB}.
A combinatorial algorithm due to Duval~\cite{duval88} can be used to
generate this table.

Here, for instance, the word {\tt AABBB} represents the commutator
\begin{align*}
& [A,[[[A,B],B],B]] = \\
& \quad A^2 B^3 - 3ABAB^2 + 3AB^2AB - 2AB^3A + 3BAB^2A - 3B^2ABA + B^3A^2,
\end{align*}
with leading power product $ A^2 B^3 = AABBB $ (w.r.t.\ lexicographical order).
\begin{table}[!ht]
\begin{center}
\begin{small}
\begin{tabular}{|r|r|l|}
\hline
\,$q$\,&$ L_q $  &\;Lyndon-Shirshov words over the alphabet $ \{\mathtt{A,B}\} $ $\vphantom{\sum_A^A}$  \\ \hline
  1\,    &   2\, & \;$ {\mathtt{A}},\,{\mathtt{B}} $          $\vphantom{\sum_A^{A^A}}$  \\
  2\,    &   1\, & \;$ {\mathtt{AB}} $              $\vphantom{\sum_A^A}$  \\
  3\,    &   2\, & \;$ {\mathtt{AAB}},\,{\mathtt{ABB}} $  $\vphantom{\sum_A^A}$  \\
  4\,    &   3\, & \;$ {\mathtt{AAAB}},\,{\mathtt{AABB}},\,{\mathtt{ABBB}} $  $\vphantom{\sum_A^A}$  \\
  5\,    &   6\, & \;$ {\mathtt{AAAAB}},\,{\mathtt{AAABB}},\,{\mathtt{AABAB}},\,
                       {\mathtt{AABBB}},\,{\mathtt{ABABB}},\,{\mathtt{ABBBB}} $  $\vphantom{\sum_A^A}$  \\
  6\,    &   9\, & \;$ {\mathtt{AAAAAB}},\,{\mathtt{AAAABB}},\,{\mathtt{AAABAB}},\,
                       {\mathtt{AAABBB}},\,{\mathtt{AABABB}},\,
                       {\mathtt{AABBAB}},\,{\mathtt{AABBBB}},\,{\mathtt{ABABBB}},\,{\mathtt{ABBBBB}} $ $\vphantom{\sum_A^A}$ \\
  7\,    &   18\,& ~\,\ldots  $\vphantom{\sum_A^A}$  \\
  8\,    &   30\,& ~\,\ldots  $\vphantom{\sum_A^A}$  \\
  9\,    &   56\,& ~\,\ldots  $\vphantom{\sum_A^A}$  \\
 10\,    & \,99\,& ~\,\ldots  $\vphantom{\sum^A}$  \\[-\jot]
\vdots~ & \,\vdots~ & ~\,$\ddots$  $\vphantom{\sum_A}$  \\[\jot] \hline
\end{tabular}
\newline
\end{small}
\caption{$ L_q $ is the number of words of length~$ q $. \label{tab:Lyndon-AB}}
\end{center}
\end{table}
\subsection{The algorithm: implicit recursive elimination} \label{subsec:ire}
On the basis of Proposition~\ref{pro:leading-lie},
and with a table of Lyndon-Shirshov words available,
we can build up a set of conditions for order $ \geq p $
for a splitting method with $ s $ stages in the following way
(recall the notation $ A_j := a_j\,A $, $ B_j = b_j\,B $):

{\em
\medskip\noindent
      For $ q=1 \ldots p $\,:
      \begin{itemize}
      \item
      Generate the symbolic expressions~\eqref{OC-Tay-1-AB}
      in the indeterminate coefficients $ a_j,b_j $ and
      the non-commutative variables {\tt A} and {\tt B}.
      \medskip
      \item
      Extract the coefficients of the power products (of degree $ q $)
      represented by all Lyndon-Shirshov words of length $ q $,
      resulting in a set of $ L_q $ polynomials
      $ P_{q,k}(a_j,b_j) $ of degree $ q $
      in the coefficients $ a_j $ and $ b_j $.
      \end{itemize}
}
\noindent
The resulting set of $ \sum_{q=1}^{p} L_q $ multivariate polynomial equations
\begin{equation} \label{OCSYS}
P_{q,k}(a_j,b_j) = 0, \quad k=1 \ldots L_q, ~~ q=1 \ldots p
\end{equation}
represents the desired conditions for order $ p $.

We call this procedure {\em implicit recursive elimination,} because the
equations generated in this way are correct in an `a~posteriori' sense
(cf.\ Example~\ref{exa:s=p=2}):
\begin{subequations}
\begin{enumerate}[--~]
\item For $ q=1 $, the basic consistency equations
      \begin{equation} \label{OC1}
      \begin{aligned}
      P_{1,1}(a_j,b_j) &= a_1 + \ldots + a_s - 1 = 0, \\
      P_{1,2}(a_j,b_j) &= b_1 + \ldots + b_s - 1 = 0,
      \end{aligned}
      \end{equation}
      are obtained.
\item {\em Assume}\, that~\eqref{OC1} is satisfied. Then, due to
      Proposition~\ref{pro:leading-lie}, the additional (quadratic) equation
      (note that $ L_2=1 $)
      \begin{equation} \label{OC2}
      P_{2,1}(a_j,b_j) = 0,
      \end{equation}
      represents one additional condition for a scheme of order $ p=2 $.
\item {\em Assume}\, that~\eqref{OC1} and~\eqref{OC2} are satisfied.
      Then, due to Proposition~\ref{pro:leading-lie},
      the additional (cubic) equations (note that $ L_3=2 $)
      \begin{equation} \label{OC3}
      P_{3,1}(a_j,b_j) = P_{3,2}(a_j,b_j) = 0,
      \end{equation}
      represent two additional conditions for a scheme of order $ p=3 $.
\item The process is continued in the same manner.
\end{enumerate}
\end{subequations}
If we (later) find a solution $ \{ a_j,b_j,\, j=1 \ldots s \} $
of the resulting system
\begin{equation*}
\eqref{OCSYS}=\{ \eqref{OC1},\eqref{OC2},\eqref{OC3},\ldots \}
\end{equation*}
of multivariate polynomial equations, this means that
\begin{align*}
\text{\eqref{OC1} is satisfied}
&~\Rightarrow~ \text{condition \eqref{OC2} is correct,} \\
\text{\eqref{OC2} is also satisfied}
&~\Rightarrow~ \text{condition \eqref{OC3} is correct,}
\end{align*}
and so on. By induction we conclude that the whole procedure is correct.
See~\cite{auzingeretal15K1}
for a more detailed exposition of this argument.
\begin{remark}
In addition, it makes sense to generate the additional conditions for
order $ p+1 $. Even if we do not solve for these conditions,
they represent the leading term of the local error, and this can be used
to search for optimized solutions for order $ p $, where the coefficients in
$ \ddh{p+1}\,\nL(h)\,\big|_{h=0} $ become minimal in size.
\end{remark}
\section{A parallel implementation} \label{sec:impl}

In our Maple code, a table of Lyndon-Shirshov words up to a fixed length
(corresponding to the maximal order aimed for; see Table~\ref{tab:Lyndon-AB})
is included as static data. The procedure {\tt Order\_conditions} displayed
below generates a set of order conditions using the algorithm described
in Section~\ref{subsec:ire}.
\begin{itemize}
\item Fist of all, we activate the package {\tt Physics}
      and declare the symbols {\tt A} and {\tt B} as non-commutative.
\item For organizing the multinomial expansion according to~\eqref{OC-Tay-1-AB}
      we use standard functions from the packages {\tt combinat}
      and {\tt combstruct}.
\item The number of terms during each stage rapidly increases as
      more stages are to be computed. Therefore we have implemented a parallel
      version based on the package {\tt Grid}. Parallelization is taken into account as follows:
      \begin{itemize}
      \item On a multi-core processor\footnote{E.g., on an Intel i7 processor,
                                               6~cores are available.
                                               The hyper-threading feature
                                               enables the use of 12 parallel threads.},
            all threads execute the same code.
            Each thread identifies itself via a call to {\tt MyNode()},
            and this is used to control execution.
            Communication between the threads is realized via message passing.
      \item Thread~0 is the master thread controlling the overall execution.
      \item For $ q=1 \ldots p $\,:
            \begin{itemize}
            \item
            Each of the working threads generates
            symbolic expressions of the form (recall $ A_j=a_j\,A $,
            $ B_j=b_j\,B $)
            \begin{equation*}
            \Pi_{\bm k} := \dbinom{q}{{\bm k}}
            \prod\limits_{j=s \ldots 1}\;
            \sum_{\ell=0}^{k_j} \dbinom{k_j}{\ell}\,B_j^{\ell}\,A_j^{k_j-\ell},
            \quad {\bm k} \in \NN_0^s,
            \end{equation*}
            appearing in the sum~\eqref{OC-Tay-1-AB}. Here the work is
            equidistributed over the threads, i.e., each of them generates
            a subset of $ \{ \Pi_{\bm k},~ {\bm k} \in \NN_0^s \} $ in parallel.
            \item
            For each of these expressions $ \Pi_{\bm k} $,
            the coefficients of all Lyndon-Shirshov monomials of degree $ q $
            are computed, and the according subsets of coefficients
            are summed up in parallel.
            \item
            Finally, the master thread 0 sums up the results
            received from all the working threads. This results in the set
            of multivariate polynomials representing the order conditions
            at level~$ q $.
            \end{itemize}
      \end{itemize}
\item The Maple code displayed below is, to some extent,
      to be read as pseudo-code.
      For simplicity of presentation we have ignored some technicalities,
      e.g., concerning the proper indexing of combinatorial tupels, etc.
      The original, working code is available from the authors.
\end{itemize}
\begin{small}
\begin{verbatim}
> with(combinat)
> with(combstruct)
> with(Grid)
> with(Physics)
>  Setup(noncommutativeprefix={A,B})

> Order_conditions := proc()
  global p,s,OC,    # I/O parameters via global variables
         Lyndon     # assume that table of Lyndon monomials is available
  this_thread := MyNode()   # each thread identifies itself
  max_threads := NumNodes() # number of available threads
  for j from 1 to s do
      A_j[j] := a[j]*A
      B_j[j] := b[j]*B
      term[-1][j] := 1
  end do
  OC=[0$p]
  for q from 1 to p do
    if this_thread>0 then  # working threads start computing
                           # master thread 0 is waiting
       Mn := allstructs(Composition(q+2),size=2)
       for j from 1 to s do
           term[q-1][j] := 0
           for mn from 1 to nops(Mn) do
               term[q-1][j] :=
                 term[q-1][j] +
                   multinomial(q,Mn[mn])*B_j[j]^Mn[mn][2]*A_j[j]^Mn[mn][1]
           end do
       end do
       k := iterstructs(Composition(q+s),size=s)
       OC_q_this_thread := [0$nops(Lyndon[q])]
       while not finished(k) do # generate expansion (7) term by term
         Ms := nextstruct(k)
         if get_active(this_thread) then # get_active:
                                         #   auxiliary Boolean function
                                         #   for equidistributing workload
            Pi_k := 1
            for j from s to 1 by -1 do
                Pi_k := Pi_k*term[Ms[j]-1][j]
            end do
            Pi_k := multinomial(q,Ms)*expand(Pi_k)
            OC_q_this_thread := # compare coefficients of Lyndon monomials
              OC_q_this_thread +
                [seq(coeff(Pi_k,Lyndon[q][l]),l=1..nops(Lyndon[q]))]
         end if
       end do
       Send(0,OC_q_this_thread) # send partial sum to master thread
    else # master thread 0 receives and sums up
         #   partial results from working threads
       OC[q] := [(-1)$nops(Lyndon[q])] # initialize sum
       for i_thread from 1 to max_threads-1 do
           OC[q] := OC[q] + Receive(i_thread)
       end do
    end if
end do
end proc

> # Example:
> p := 4
> s := 4
> Launch(Order_conditions,imports=["p","s"],exports=["OC"])  # run

> OC[1]
  [a[1]+a[2]+a[3]+a[4]-1,
   b[1]+b[2]+b[3]+b[4]-1]

> OC[2]
  [2*a[2]*b[1]+2*a[3]*b[1]+2*a[3]*b[2]
   +2*a[4]*b[1]+2*a[4]*b[2]+2*a[4]*b[3]-1]

> OC[3]
  [3*a[2]^2*b[1]+6*a[2]*a[3]*b[1]+6*a[2]*a[4]*b[1]
   +3*a[3]^2*b[1]+3*a[3]^2*b[2]+6*a[3]*a[4]*b[1]+6*a[3]*a[4]*b[2]
    +3*a[4]^2*b[1]+3*a[4]^2*b[2]+3*a[4]^2*b[3]-1,
   3*a[2]*b[1]^2+3*a[3]*b[1]^2+6*a[3]*b[1]*b[2]
    +3*a[3]*b[2]^2+3*a[4]*b[1]^2+6*a[4]*b[1]*b[2]+6*a[4]*b[1]*b[3]
     +3*a[4]*b[2]^2+6*a[4]*b[2]*b[3]+3*a[4]*b[3]^2-1]

> OC[4]
  [4*a[2]^3*b[1]+12*a[2]^2*a[3]*b[1]+12*a[2]^2*a[4]*b[1]
   +12*a[2]*a[3]^2*b[1]+24*a[2]*a[3]*a[4]*b[1]+12*a[2]*a[4]^2*b[1]
    +4*a[3]^3*b[1]+4*a[3]^3*b[2]+12*a[3]^2*a[4]*b[1]
     +12*a[3]^2*a[4]*b[2]+12*a[3]*a[4]^2*b[1]+12*a[3]*a[4]^2*b[2]
      +4*a[4]^3*b[1]+4*a[4]^3*b[2]+4*a[4]^3*b[3]-1,
   6*a[2]^2*b[1]^2+12*a[2]*a[3]*b[1]^2+12*a[2]*a[4]*b[1]^2
    +6*a[3]^2*b[1]^2+12*a[3]^2*b[1]*b[2]+6*a[3]^2*b[2]^2
     +12*a[3]*a[4]*b[1]^2+24*a[3]*a[4]*b[1]*b[2]+12*a[3]*a[4]*b[2]^2
      +6*a[4]^2*b[1]^2+12*a[4]^2*b[1]*b[2]+12*a[4]^2*b[1]*b[3]
       +6*a[4]^2*b[2]^2+12*a[4]^2*b[2]*b[3]+6*a[4]^2*b[3]^2-1,
   4*a[2]*b[1]^3+4*a[3]*b[1]^3+12*a[3]*b[1]^2*b[2]
    +12*a[3]*b[1]*b[2]^2+4*a[3]*b[2]^3+4*a[4]*b[1]^3
     +12*a[4]*b[1]^2*b[2]+12*a[4]*b[1]^2*b[3]+12*a[4]*b[1]*b[2]^2
      +24*a[4]*b[1]*b[2]*b[3]+12*a[4]*b[1]*b[3]^2+4*a[4]*b[2]^3
       +12*a[4]*b[2]^2*b[3]+12*a[4]*b[2]*b[3]^2+4*a[4]*b[3]^3-1]
\end{verbatim}
\end{small}

\medskip
For practical use some further tools have been developed, e.g.\
for generating tables of polynomial coefficients for further use,
e.g., by numerical software other than Maple.
This latter job can also be parallelized.
\subsection{Special cases} \label{subsec:special}
Some special cases are of interest:
\begin{itemize}
\item {\em Symmetric schemes}\, are characterized by the property
      $ \nS(-h,\nS(h,u)) = u $. Here, either $ a_1=0 $ or $ b_s=0 $,
      and the remaining coefficient sets $ (a_j) $ and $ (b_j) $ are palindromic.
      Symmetric schemes have an even order $ p $, and the order conditions
      for even orders need not be included; see~\cite{haireretal06}.
      Thus, we use a special ansatz and generate a reduced set of
      equations.
\smallskip
\item {\em Palindromic schemes}\, were introduced in~\cite{auzingeretal15K1}
      and characterized by the property $ \nS(-h,{\check\nS}(h,u)) = u $,
      where $ \check\nS $ denotes the scheme $ \nS $ with the role of $ A $
      and $ B $ interchanged.
      In this case, the full coefficient set
      \begin{equation*}
      (a_1,b_1,\ldots,a_s,b_s)
      \end{equation*}
      is palindromic. As for symmetric schemes, this means that a special
      ansatz is used, and again it is sufficient to generate a reduced
      set of equations, see~\cite{auzingeretal15K1}.
\end{itemize}
Apart from these modifications, the basic algorithm remains unchanged.
\section{Modifications and extensions} \label{sec:ext}
\subsection{Splitting into more than two operators} \label{sec:OC-ABC}
Our algorithm directly generalizes to the case of
splitting into more than two operators.
Consider evolution equations where the right-hand side
splits into three parts,
\begin{equation} \label{ABC-problem}
\partial_t u(t) = F(u(t)) = A(u(t)) + B(u(t)) + C(u(t)),
%\quad t \geq 0, \qquad u(0)~\text{given,}
\end{equation}
and associated splitting schemes,
\begin{subequations} \label{ABC-scheme}
\begin{equation} \label{ABC-scheme-1}
\nS(h,u) = \nS_s(h,\nS_{s-1}(h,\ldots,\nS_1(h,u))) \approx \phi_F(h,u),
\end{equation}
with
\begin{equation} \label{ABC-scheme-2}
\nS_j(h,v) = \phi_C(c_j\,h,\phi_B(b_j\,h,\phi_A(a_j\,h,v))),
\end{equation}
\end{subequations}
see~\cite{auzingeretal14a}.
Here the linear representation~\eqref{OC-Tay-1-AB} generalizes as follows,
with $A_j = a_j\,A,\, B_j=b_j\,B,\, C_j=c_j\,C $, and
$ {\bm k}=(k_1,\ldots,k_s) \in \NN_0^s $,\,
$ {\bm\ell} = (\ell_A,\ell_B,\ell_C) \in \NN_0^3 $:
\begin{equation} \label{OC-Tay-1-ABC}
\ddh{q}\,\nL(h)\,\Big|_{h=0}
= \sum_{|{\bm k}|=q} \dbinom{q}{{\bm k}}
  \prod\limits_{j=s \ldots 1}\;
  \sum_{|{\bm\ell}|=k_j} \dbinom{k_j}{{\bm\ell}}\,
                         C_j^{\ell_C}\,B_j^{\ell_B}\,A_j^{\ell_A}
  \;-\; (A+B+C)^q.
\end{equation}
On the basis of these identities, the algorithm from Section~\ref{subsec:ire}
generalizes in a straightforward way.
The Lyndon basis representing independent commutators now corresponds to Lyndon words
over the alphabet $ \{\mathtt{A,B,C}\} $, see~\cite{auzingeretal15K1}.
Concerning special cases (symmetries etc.) and parallelization,
similar considerations as before apply.
\subsection{Pairs of splitting schemes} \label{subsec:pairs}
For the purpose of adaptive time-splitting algorithms,
the construction of (optimized) pairs of schemes of orders
$ (p,p+1) $ is favorable.
Generating a respective set of order conditions can also be
accomplished by a modification of our code; the difference lies in
the fact that some coefficients are chosen a priori
(corresponding to a given method of order $ p+1 $),
but apart from this the generation of order conditions for an associated
scheme of order $ p $ works analogously as before.
Finding optimal schemes is then accomplished by tracing a large set
of possible solutions; see~\cite{auzingeretal15K1}.

\paragraph{Acknowledgements.}
This work was supported by the Austrian Science Fund (FWF) under grant P24157-N13,
and by the Vienna Science and Technology Fund (WWTF) under grant MA-14-002.

Computational results based on the ideas in this work have been achieved in part using the
Vienna Scientific Cluster (VSC).
%
% ---- Bibliography ----
%

\end{document}